\documentclass[11pt]{article}
\usepackage[latin1]{inputenc}
\usepackage{amsmath}
\usepackage{amsfonts}
\usepackage{amssymb}
\usepackage[american]{babel}
\usepackage{graphicx}
\usepackage{amsthm}
\usepackage{pstricks}

\newtheorem{proposition}{Proposition}
\newtheorem{theorem}{Theorem}
\newtheorem{lemma}{Lemma}
\newtheorem{problem}{Problem}
\newtheorem{definition}{Definition}

\newtheorem{conjecture}{Conjecture}
\newcommand{\s}{\v{s}}

\theoremstyle{definition}

\begin{document}

\title{A family of multigraphs with large palette index}

\author{M.Avesani \thanks{Dipartimento di Informatica,
Universit\`a di Verona, Strada Le Grazie 15, Verona (Italy)}, 
A.Bonisoli \thanks{Dipartimento di Scienze Fisiche, Informatiche e Matematiche, 
Universit\`a di Modena e Reggio Emilia, Via Campi 213/b, Modena (Italy)}, 
G.Mazzuoccolo \thanks{Dipartimento di Informatica, Universit\`a di Verona, 
Strada Le Grazie 15, Verona (Italy)}}

\maketitle

\begin{abstract}
\noindent 
Given a proper edge-coloring of a loopless multigraph, the palette of a vertex 
is defined as the set of colors of the edges which are incident with it. 
The palette index of a multigraph is defined as the minimum number of distinct 
palettes occurring among the vertices, taken over all proper edge-colorings 
of the multigraph itself. In this framework, the palette multigraph of an 
edge-colored multigraph is defined in this paper and some of its properties 
are investigated. 
We show that these properties can be applied in a natural way in order to produce the first known family
of multigraphs whose palette index is expressed in terms of the maximum degree by a quadratic polynomial.
We also attempt an analysis of our result in connection with some related questions.
\end{abstract}

\noindent 
\textit{Keywords: palette index, edge-coloring, interval edge-coloring. 
MSC(2010): 05C15}

\section{Introduction}
\label{sec:intro}

Generally speaking, as soon as a chromatic parameter for graphs is 
introduced, the first piece of information that is retrieved is 
whether some universal meaningful upper or lower bound holds for it. 
This circumstance is probably best exemplified by mentioning, say, 
Brooks' theorem for the chromatic number and Vizing's theorem for 
the chromatic index. In either instance the maximum degree $\Delta$ 
is involved and that probably explains the trend to consider $\Delta$ 
as a somewhat natural parameter, in terms of which bounds for other 
chromatic parameters should be expressed. In the current paper we 
make no exception to this trend and use the maximum degree $\Delta$ as a 
reference value for the recently introduced chromatic parameter known 
as the \textit{palette index}. To this purpose we introduce an additional 
tool, that we call the \textit{palette multigraph}, which can be defined 
from a given loopless multigraph with a a proper edge-coloring. Some 
properties of the palette multigraph are investigated in Section 
\ref{sec:preliminaries} and we feel they might be of interest in their
own right. In the current context, we use these properties in connection 
with an attempt of finding a polynomial upper bound in terms of $\Delta$ 
for the palette index of a multigraph with maximum degree $\Delta$.  
As a consequence of our main construction in Section \ref{sec:mainconstruction},
we can assert that if such a polynomial bound exists at all then it must 
be at least quadratic.

Throughout the paper we use the term \textit{multigraph} 
to denote an undirected multigraph with no loops. 
For any given multigraph $G$, we always denote by 
$V(G)$ and $E(G)$ the set of vertices and the set of 
edges of $G$, respectively.  We further denote by $G_s$ 
the simple graph obtained from $G$ by shrinking to a single 
edge any set of multiple edges joining two given vertices.
  
By a \textit{coloring} of a multigraph $G$ we always mean 
a proper edge-coloring of $G$. A coloring of $G$ is thus 
a mapping $c: E(G)\to C$, where $C$ is a finite set whose 
elements are designated as colors, with the property that 
adjacent edges always receive distinct colors. We shall 
often say that $(G,c)$ is a colored multigraph, meaning 
that $c$ is a coloring of the multigraph $G$.

Given a colored multigraph $(G,c)$, the 
\textit{palette} $P_c(x)$ of a vertex $x$ of $G$ is the 
set of colors that $c$ assigns to the edges which are 
incident with $x$.

The palette index $\check s(G)$ of a simple graph $G$ is 
defined in \cite{HKMW} as the minimum number of palettes
occurring in a coloring of $G$. The definition can be extended 
verbatim to multigraphs. The exact value of the palette 
index is known for some classes of simple graphs.

   \begin{itemize}
   \item
A graph has palette index $1$ if and only if
it is a class $1$ regular graph \cite[Proposition 1]{HKMW}.
   \item 
A connected class $2$ cubic graph has palette index $3$ or $4$ 
according as it does or it does not possess a perfect matching, 
respectively \cite[Theorem 9]{HKMW}.
\item 
If $n$ is odd, $n\ge3$ then $\check s(K_n)$ is $3$ or $4$ 
depending on $n\equiv3$ or $1\mod{4}$, respectively 
\cite[Theorem 4]{HKMW}.
   \item 
The palette index of complete bipartite graphs was determined in 
\cite{HH} in many instances.
   \end{itemize}

The quoted result for complete graphs shows that it is possible 
to find a family of graphs, for which the maximum degree can become 
arbitrarily large, and yet the palette index admits a constant upper bound,
namely $4$ in this case.

As it was remarked in \cite{BonMaz}, the fact that a class $2$ regular graph 
of degree $\Delta$ always admits a $(\Delta+1)$-coloring forces  
$\Delta+1$  to be an upper bound for the palette index of such a a graph
($\Delta+1$ is namely the number of $\Delta$-subsets of a $(\Delta+1)$-set 
of colors). 

That is definitely not the case for non-regular graphs: 
it was shown in \cite{BonBonMaz} that for each positive integer 
$\Delta$ there exists a tree with maximum degree $\Delta$ whose palette
index grows asymptotically as $\Delta\ln(\Delta)$.

Consequently, one cannot expect for the palette index any analogue of, say, 
Vizing's theorem for the chromatic index: the palette index of graphs of 
maximum degree $\Delta$ cannot admit a linear polynomial in $\Delta$ as a 
universal upper bound.

It is the main purpose of the present paper to produce an infinite family 
of multigraphs, whose palette index grows asymptotically as $\Delta^2$, see 
Section \ref{sec:mainconstruction}. Our method relies essentially on a tool 
that we define in Section \ref{sec:preliminaries}, namely the palette multigraph
of a colored multigraph. This concept is strictly related to the notion 
of palette index and it appears to yield a somewhat natural approach to the study 
of this chromatic parameter.

\section{The palette multigraph of a colored multigraph}
\label{sec:preliminaries} 

For any given finite set $X$ and positive integer $t$ we denote 
by $t\cdot X$ the multiset in which each element of $X$ is repeated $t$ times.

The next definition will play a crucial role for our  
construction in Section \ref{sec:mainconstruction}.
Given a colored multigraph  $(G,c)$, we  
define its \textit{palette multigraph} $\Gamma_c(G)$
as follows.

The vertex-set of $\Gamma_c(G)$ is 
$V(\Gamma_c(G))=\{ P_c(v): v \in V(G)\}$.
In other words the vertices of $\Gamma_c(G)$ are 
the palettes of $(G,c)$. 

For any given pair of adjacent vertices $x$ and $y$ of $G$, 
we declare the (not necessarily distinct) palettes 
$P_c(x)$ and $P_c(y)$ to be adjacent and define the 
corresponding edge in the palette multigraph $\Gamma_c(G)$.

More precisely, if $x$ and $y$ are adjacent vertices 
in $G$ such that their palettes $P_c(x)$ and $P_c(y)$ 
are distinct, then $P_c(x)$ and $P_c(y)$ yield two 
distinct vertices connected by an ordinary edge in the 
palette multigraph $\Gamma_c(G)$, see vertices $x_1$ and $x_2$ in Figure
\ref{fig:palettegraph}. If, instead, 
$x$ and $y$ are adjacent vertices in $G$ with 
equal palettes $P_c(x)$ and $P_c(y)$, these form a single 
vertex with a loop in the palette multigraph 
$\Gamma_c(G)$, see vertices $x_2$ and $x_3$ in Figure 
\ref{fig:palettegraph}.

If two (equal or unequal) palettes appear on several pairs 
of adjacent vertices of $G$, then each such pair yelds one edge 
in $\Gamma_c(G)$ (either a loop or an ordinary edge).
It is thus quite possible that the palette multigraph $\Gamma_c(G)$
presents multiple (ordinary) edges between two given distinct vertices
as well as multiple loops at a given vertex.

An example of a pair $(G,c)$ and the corresponding palette multigraph 
$\Gamma_c(G)$ is presented in Figure \ref{fig:palettegraph}.

\begin{figure}
\centering
\includegraphics[width=12cm]{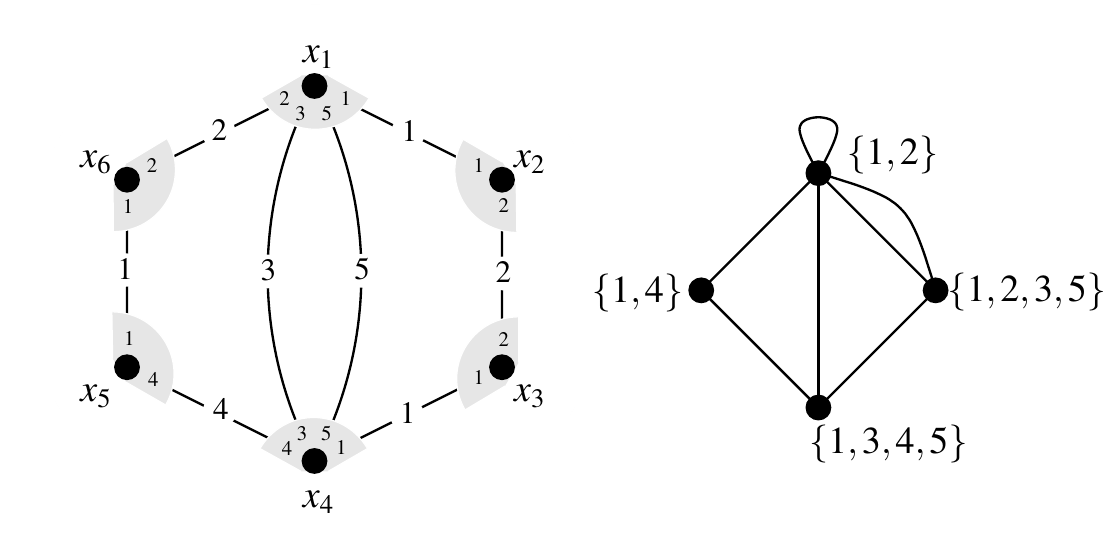}
\caption{A multigraph $G$ with a proper edge-colouring and the associated palette multigraph} 
\label{fig:palettegraph}
\end{figure}

The number of vertices of the palette multigraph $\Gamma_c(G)$ is thus 
equal to the number of distinct palettes in the colored multigraph $(G,c)$, while 
the number of edges (loops and ordinary edges) in $\Gamma_c(G)$ is equal to the 
number of edges in the underlying simple graph $G_s$.

The following proposition is also an easy consequence of the definition of the 
palette multigraph: Note that each loop in $\Gamma_c(G)$ contributes $2$ 
to the degree of its vertex. 

\begin{proposition}\label{degrees_palettegraph}
For any given colored multigraph $(G,c)$, 
the degree of a vertex $P_c(x)$ in the palette multigraph 
$\Gamma_c(G)$ is equal to the sum of the degrees in the 
underlying simple graph $G_s$ of all vertices whose palette 
in $(G,c)$ is equal to $P_c(x)$.  
\end{proposition}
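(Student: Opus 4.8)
The plan is to unwind the definitions directly, since the statement is essentially a bookkeeping identity about how edges of $G_s$ are distributed among the vertices of $\Gamma_c(G)$. Fix a colored multigraph $(G,c)$ and a palette $P=P_c(x)$, which is a single vertex of $\Gamma_c(G)$. First I would set $S_P=\{v\in V(G): P_c(v)=P\}$, the fiber of vertices of $G$ collapsing to the vertex $P$ of $\Gamma_c(G)$. The quantity on the right-hand side of the proposition is then $\sum_{v\in S_P}\deg_{G_s}(v)$, so the task is to show $\deg_{\Gamma_c(G)}(P)$ equals this sum.

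Next I would recall the construction of edges in $\Gamma_c(G)$: each edge of $G_s$, say joining adjacent vertices $u$ and $w$ of $G$, produces exactly one edge of $\Gamma_c(G)$, namely an ordinary edge between $P_c(u)$ and $P_c(w)$ if these palettes differ, and a loop at $P_c(u)=P_c(w)$ otherwise. So there is a bijection between $E(G_s)$ and the edge-multiset of $\Gamma_c(G)$. The key step is then to count, with the loop convention that a loop contributes $2$ to the degree, how many of these edges contribute to $\deg_{\Gamma_c(G)}(P)$. An edge of $G_s$ between $u$ and $w$ contributes to $\deg_{\Gamma_c(G)}(P)$ once for each endpoint lying in $S_P$: if exactly one of $u,w$ is in $S_P$, the resulting ordinary edge is incident to $P$ once; if both $u,w\in S_P$ (which forces $P_c(u)=P_c(w)=P$), the resulting loop at $P$ contributes $2$; and if neither endpoint is in $S_P$, that edge contributes $0$. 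In every case the contribution equals $|\{u,w\}\cap S_P|$ counted with multiplicity, i.e. the number of endpoints of that edge of $G_s$ that belong to $S_P$.

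Finally I would sum over all edges of $G_s$: $\deg_{\Gamma_c(G)}(P)=\sum_{e\in E(G_s)}(\text{number of endpoints of }e\text{ in }S_P)$. Swapping the order of summation, this equals $\sum_{v\in S_P}(\text{number of edges of }G_s\text{ incident to }v)=\sum_{v\in S_P}\deg_{G_s}(v)$, which is exactly the claimed formula. I do not anticipate a genuine obstacle here; the only point requiring care is the loop convention, and the fact that two adjacent vertices of $G$ with equal palette produce a loop (contributing $2$) rather than nothing is precisely what makes the endpoint-counting argument come out right — this is the one place where a careless treatment would give the wrong constant. Everything else is a routine double-counting argument, and the bijection between $E(G_s)$ and the edges of $\Gamma_c(G)$ already noted in the text before the proposition makes the summation step immediate.
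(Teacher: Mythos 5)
Your proof is correct and matches the paper's intent: the paper states this proposition as an immediate consequence of the definition (with the remark that a loop contributes $2$ to the degree), and your double-counting over the edges of $G_s$, with careful handling of the loop case, is exactly the routine verification the authors leave implicit.
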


\section{The main construction}
\label{sec:mainconstruction} 

The main purpose of this Section is the construction of a 
multigraph $G^{\Delta}$ with maximum degree $\Delta$, 
whose palette index is expressed by a quadratic polynomial 
in $\Delta$.

For the sake of brevity we shall assume $\Delta$ even, $\Delta\ge2$:
a slight modification of our construction yields the 
same result for odd values of $\Delta$.

The multigraph $G^{\Delta}$ is obtained as the disjoint union of multigraphs 
$H^{\Delta}_t$, for $t=1,2,...,\Delta-2$, which are defined as follows.

Let $H^{\Delta}$ be the simple graph with vertices $u$, $v^0$, $v^1$,
\ldots, $v^{\Delta-1}$ and edges $uv^0$, $uv^1$,
\ldots, $uv^{\Delta-1}$, $v^0v^1$, $v^2v^3$, \ldots, $v^{\Delta-2}v^{\Delta-1}$.
The graph $H^{\Delta}$ is sometimes called a windmill graph \cite{Gal}
and can also be described as being obtained from the wheel $W_{\Delta}$ 
(see \cite{BonMur}) by alternately deleting edges on the outer cycle.

The multigraph $H^{\Delta}_t$ is obtained by replacing each edge
$v^jv^{j+1}$ which is not incident with the central vertex $u$ 
with $t$ repeated edges between the same vertices $v^j$ and $v^{j+1}$.

In detail, define for $t=1,2,...,\Delta-2$
\begin{equation*}
V(H^\Delta _t)=\{u_t, v^0_t, v^1_t,\ldots, v^{\Delta-1}_t\}
\end{equation*}
\begin{equation*}
E(H^\Delta _t)=t \cdot\{v^j_tv^{j+1}_t : j\in \{0,2,4,...\Delta-2\}\} \cup \{ u_tv^{j}_t : j\in \{0,1,2,...\Delta-1\}\}
\end{equation*}
\begin{equation*}
H^{\Delta}_t=\left(V(H^{\Delta}_t),E(H^{\Delta}_t)\right)
\end{equation*}
For $j=0,1,\ldots,\Delta-1$ we denote the edge $u_tv^j_t$ by $e^j_t$ or simply by $e^j$ 
once $t$ is understood. Furthermore, for any index $j\in\{0,1,\ldots,\Delta-1\}$ there 
is a uniquely determined index $j'\in\{0,1,\ldots,\Delta-1\}$, $j\ne j'$ such that 
$v^{j'}_t$ is the unique vertex, other than $u_t$, which is adjacent to 
$v^j_t$ in $H^{\Delta}_t$.

The submultigraph of $H^{\Delta}_t$ which is induced by the vertices 
$u_t$, $v^j_t$, $v^{j'}_t$ will be denoted by $L^{\,j}$. The edges of 
$L^{\,j}$ are $e^j$, $e^{j'}$ and the $t$ repeated edges 
having  $v^j_t$ and $v^{j'}_t$ as endvertices.
By definition, we have $L^{\,j}=L^{\,j'}$.

\begin{figure}
\centering
\includegraphics[width=6cm]{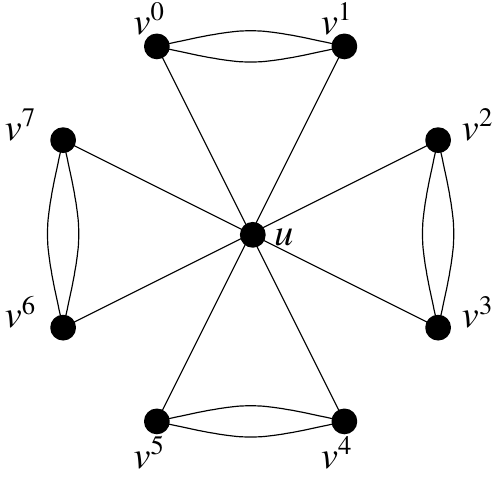}
\caption{The graph $H^8_2$} 
\label{fig:H82}
\end{figure}

We now assume that a $k$-edge-coloring $c:E(H^\Delta _t)\to C=\{c_0,c_1,\ldots,c_{k-1}\}$
is given and study some properties of the palette multigraph $\Gamma_c(H^{\Delta}_t)$.
Since the central vertex $u_t$ has degree $\Delta$ in $H^{\Delta} _t$ 
we have $\Delta\le k$ and may assume, with no loss of generality, that  
$c(e^j)=c_j$ holds for $j=0,1,\ldots,\Delta-1$. The inequality $t\le\Delta-2$ yields 
in turn $t+1<\Delta$. Consequently, since each non central vertex $v^j_t$ has 
degree $t+1$, we see that the palette $P_c(u_t)=\{0,1,\ldots,\Delta-1\}$ is distinct 
from every other palette $P_c(v^j_t)$. For that reason, rather than looking at the 
palette multigraph $\Gamma_c(H^{\Delta}_t)$ we consider the submultigraph 
$\Gamma^-_c(H^{\Delta}_t)=\Gamma_c(H^{\Delta}_t)\smallsetminus P_c(u_t)$ 
obtained by removing the palette $P_c(u_t)$ (as a vertex of the palette multigraph).

\begin{lemma}
The multigraph $\Gamma^-_c(H^{\Delta}_t)$ is a simple graph and is a forest.
\end{lemma}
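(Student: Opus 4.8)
The plan is to describe $\Gamma^-_c(H^\Delta_t)$ completely and then to kill every potential cycle (loops and multiple edges included) with a single symmetric-difference argument on the fixed colour classes $c_0,\dots,c_{\Delta-1}$.

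First I would pin down the edges of $\Gamma^-_c(H^\Delta_t)$. Each edge of $\Gamma_c(H^\Delta_t)$ comes from an edge of the underlying simple graph, which here is $H^\Delta$; its edges are the $\Delta$ spokes $uv^j$ and the $\Delta/2$ rim edges $v^jv^{j'}$. Since $P_c(u_t)$ is distinct from every $P_c(v^j_t)$ (already observed), deleting the vertex $P_c(u_t)$ removes exactly the $\Delta$ edges arising from the spokes, so $\Gamma^-_c(H^\Delta_t)$ has exactly $\Delta/2$ edges, one per partner pair $\{v^j_t,v^{j'}_t\}$, joining $P_c(v^j_t)$ to $P_c(v^{j'}_t)$. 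The local fact I would then record is: if $S$ is the set of the $t$ colours on the parallel edges of this pair, then those parallel edges meet both $v^j_t$ and $v^{j'}_t$, so $c_j,c_{j'}\notin S$; hence $P_c(v^j_t)=S\cup\{c_j\}$, $P_c(v^{j'}_t)=S\cup\{c_{j'}\}$, and consequently
$$
P_c(v^j_t)\ \triangle\ P_c(v^{j'}_t)=\{c_j,c_{j'}\},
$$
a set of size exactly $2$ because $c_0,\dots,c_{\Delta-1}$ are pairwise distinct. In particular $P_c(v^j_t)\ne P_c(v^{j'}_t)$, so no edge of $\Gamma^-_c(H^\Delta_t)$ is a loop.

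Next I would rule out all cycles at once. Suppose $\Gamma^-_c(H^\Delta_t)$ has a cycle through vertices $Q_1,Q_2,\dots,Q_m,Q_1$ (with $m\ge 1$, a loop when $m=1$ and a pair of parallel edges when $m=2$). Its $m$ distinct edges correspond, by the previous paragraph, to $m$ distinct partner pairs $\pi_1,\dots,\pi_m$; since the partner pairs of $H^\Delta$ form a matching, distinct ones are disjoint, so writing $\pi_i=\{a_i,b_i\}$ the $2m$ indices $a_1,b_1,\dots,a_m,b_m$ are pairwise distinct, and hence so are the $2m$ colours $c_{a_1},c_{b_1},\dots,c_{a_m},c_{b_m}$. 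For each $i$ the $i$-th edge joins $Q_i$ and $Q_{i+1}$ (indices mod $m$) and comes from $\pi_i$, so $Q_i\triangle Q_{i+1}=\{c_{a_i},c_{b_i}\}$. Forming the symmetric difference of these $m$ identities, the left side telescopes to $\emptyset$ while the right side equals the disjoint union $\{c_{a_1},c_{b_1}\}\sqcup\cdots\sqcup\{c_{a_m},c_{b_m}\}$, a nonempty ($2m$-element) set — a contradiction. Hence $\Gamma^-_c(H^\Delta_t)$ is acyclic; in particular it has neither loops nor multiple edges, so it is a simple graph, and it is a forest.

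The only thing that needs care is the first paragraph's bookkeeping — recognising that after deleting $P_c(u_t)$ the whole multigraph is driven by the partner pairs, and that across a partner pair the palettes differ by a two-element subset of the pairwise-distinct spoke colours. Once that is in hand, the symmetric-difference cancellation is the crux, and it handles the degenerate cycle lengths $m=1,2$ on the same footing as $m\ge 3$, because the right-hand union is never empty.
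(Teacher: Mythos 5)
Your proof is correct, but it takes a genuinely different route from the paper's. The paper argues in three separate steps, each by chasing a single colour: no loops (the spoke colour $c_j$ lies in $P_c(v^j_t)$ but not in $P_c(v^{j'}_t)$), no parallel edges (if $P_c(v^j_t)=P_c(v^h_t)$ then $c_j$ is forced onto a rim edge at $v^h_t$, hence into $P_c(v^{h'}_t)$ but not into $P_c(v^{j'}_t)$), and no longer cycles (the colour $c_0$ propagates around a hypothetical cycle into every palette on it, contradicting $c_0\notin P_c(v^1_t)$). You instead determine each palette exactly, $P_c(v^j_t)=S\cup\{c_j\}$ and $P_c(v^{j'}_t)=S\cup\{c_{j'}\}$ with $S$ the rim colours, obtain the identity $P_c(v^j_t)\,\triangle\,P_c(v^{j'}_t)=\{c_j,c_{j'}\}$, and then dispose of loops, digons and longer cycles in one stroke by telescoping the symmetric differences around the cycle: the left side cancels to $\emptyset$ while the right side is a disjoint union of $2m$ distinct spoke colours, using that the partner pairs form a matching and the edges of $\Gamma^-_c(H^{\Delta}_t)$ are in bijection with them. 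Both proofs hinge on the same local fact (properness keeps $c_j$ and $c_{j'}$ off the rim edges of the pair $\{j,j'\}$); what your version buys is uniformity and a slightly stronger structural statement about how palettes across a partner pair differ, while the paper's colour-propagation is more elementary and avoids the bookkeeping needed to justify that distinct cycle edges come from disjoint partner pairs. Your bookkeeping is in fact justified (each partner pair contributes exactly one edge of the palette multigraph, and $P_c(u_t)$ coincides with no other palette since $t+1<\Delta$), so the argument stands as written.
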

\begin{proof}
We prove first of all that $\Gamma^-_c(H^{\Delta}_t)$ has no loop, 
that is $P_c(v^j_t) \ne P_c(v^{j'}_t)$ for all $j$.
Consider the two adjacent vertices $v^j_t$ and $v^{j'}_t$. 
The corresponding edges $e^j$ and $e^{j'}$ have distinct colors 
$c_j$ and $c_{j'}$ in $\{0,.....,\Delta-1\}$, respectively. The color 
$c_j$ cannot appear on one of the edges between $v^j_t$ and $v^{j'}_t$, 
since $c$ is a proper coloring. Hence, $c_j$ belongs to $P_c(v^j_t)$ and 
does not belong to $P_c(v^{j'}_t)$, and the two palettes are distinct, as claimed.

Next, we prove that $\Gamma^-_c(H^{\Delta}_t)$ has no multiple edges, by showing that 
if $P_c(v^j_t)=P_c(v^h_t)$ for $h\ne j,j' $, then $P_c(v^{j'}_t)\ne P_c(v^{h'}_t)$.
Suppose the vertices $v^j_t$ and $ v^h_t$ share the same palette. The edges $e^j$ and 
$e^h$ are colored with colors $c_j$ and $c_h$, respectively. Hence $\{c_j,c_h\}\subset P_c(v^j_t)(=P_c(v^h_t))$. 
In particular, one of the edges between $v^h_t$ and $v^{h'}_t$ has color $c_j$ 
and so we have $c_j\in P_c(v^{h'}_t)$. On the other hand, $c_j$ does not belong to 
$P_c(v^{j'}_t)$ because $c$ is a proper coloring, and the claim follows.  


In order to complete our proof, we need to prove that 
$\Gamma^-_c(H^{\Delta}_t)$ has no cycle and is thus a forest. 

Assume, by contradiction, that $\Gamma^-_c(H^{\Delta}_t)$ has a cycle $\Gamma$. 
Without loss of generality, we may assume that $\Gamma$ contains the vertices 
$P_c(v^0_t)$ and $P_c(v^1_t)$ of $\Gamma^-_c(H^{\Delta}_t)$. Since $P_c(v^0_t)$ 
has degree at least two in $\Gamma^-_c(H^{\Delta}_t)$, there exists $h\ne0$ such 
that $P_c(v^0_t)=P_c(v^h_t)$ and $P_c(v^{h'}_t)$ belongs to $\Gamma$. Recall that $e_0$ 
has colour $c_0$ in $c$. Therefore, the colour $c_0$ belongs to both palettes $P_c(v^0_t)$
and $P_c(v^h_t)$, since they are the same palette. Furthermore, the edge $e_h$ has colour $c_h$, 
different from $c_0$. Then $c_0$ is the colour of one of the edges between $v^h_t$ and 
$v^{h'}_t$. Hence, the colour $c_0$ also belongs to the palette $P_c(v^{h'}_t)$. 
Repeating the same argument, we obtain that $c_0$ belongs to each palette of the cycle 
$\Gamma$. That is a contradiction, since $c_0$ does not belong to the palette $P_c(v^1_t)$.
\end{proof}

\begin{lemma}\label{lem:average_degree}
The degree of a vertex $P_c(v^j_t)$ in $\Gamma^-_c(H^{\Delta}_t)$ is 
exactly equal to the number of vertices of $G$ having the same palette 
$P_c(v^j_t)$ in the colouring $c$. 
\end{lemma}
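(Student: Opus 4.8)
The plan is to read off the degree of $P_c(v^j_t)$ in the full palette multigraph $\Gamma_c(H^{\Delta}_t)$ from Proposition~\ref{degrees_palettegraph}, and then to subtract exactly the contribution that is lost when the vertex $P_c(u_t)$ is deleted in passing to $\Gamma^-_c(H^{\Delta}_t)$.

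First I would set $m$ to be the number of vertices $w$ of $H^{\Delta}_t$ with $P_c(w)=P_c(v^j_t)$; this is precisely the quantity the lemma claims to equal the degree. As already observed in the discussion preceding the previous lemma, the central palette $P_c(u_t)=\{c_0,\ldots,c_{\Delta-1}\}$ is distinct from every non-central palette, so each of these $m$ vertices is one of $v^0_t,\ldots,v^{\Delta-1}_t$; in particular each of them has degree exactly $2$ in the underlying simple graph $(H^{\Delta}_t)_s$, namely the edge $e^h=u_tv^h_t$ together with the single edge obtained by shrinking the $t$ parallel edges joining $v^h_t$ to its unique other neighbour $v^{h'}_t$. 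Proposition~\ref{degrees_palettegraph} then yields $\deg_{\Gamma_c(H^{\Delta}_t)}(P_c(v^j_t))=2m$.

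Next I would account for the edges of $\Gamma_c(H^{\Delta}_t)$ that disappear when $P_c(u_t)$ is removed. The edges of $(H^{\Delta}_t)_s$ incident with $u_t$ are exactly $e^0,\ldots,e^{\Delta-1}$, and each $e^h$ contributes in $\Gamma_c(H^{\Delta}_t)$ an ordinary edge joining the distinct vertices $P_c(u_t)$ and $P_c(v^h_t)$ --- ordinary, and not a loop, precisely because $P_c(u_t)$ differs from every non-central palette. Among these edges, the ones incident with $P_c(v^j_t)$ are exactly those $e^h$ for which $P_c(v^h_t)=P_c(v^j_t)$, hence exactly $m$ of them. Deleting the vertex $P_c(u_t)$ thus removes precisely these $m$ ordinary edges from the star of $P_c(v^j_t)$, whence $\deg_{\Gamma^-_c(H^{\Delta}_t)}(P_c(v^j_t))=2m-m=m$, which is the assertion.

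No step here is a genuine obstacle; the only point needing care is to be sure that each edge that is removed contributes $1$ and not $2$ to the degree (that is, that it is not a loop at $P_c(v^j_t)$) and, dually, that the $m$ surviving edges at $P_c(v^j_t)$ are genuine ordinary edges. Both facts follow from the distinctness of $P_c(u_t)$ from the non-central palettes together with the previous lemma, which already guarantees that $\Gamma^-_c(H^{\Delta}_t)$ is a loopless simple graph.
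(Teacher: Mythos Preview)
Your proof is correct and rests on the same ingredients as the paper's: Proposition~\ref{degrees_palettegraph} together with the simple degree structure of the non-central vertices in the underlying simple graph. The only presentational difference is that the paper applies the proposition directly to $H^{\Delta}_t\smallsetminus\{u_t\}$ (whose underlying simple graph is a perfect matching, so every vertex has degree~$1$, giving the count $m$ immediately), whereas you first compute the degree $2m$ in the full palette multigraph and then subtract the $m$ spokes lost upon deleting $P_c(u_t)$.
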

\begin{proof}
The underlying simple graph of $H^{\Delta}_t\smallsetminus\{u_t\}$ is 
the disjoint union of isolated edges, that is every vertex has degree exactly $1$ 
in the underlying simple graph. It follows from Proposition \ref{degrees_palettegraph}
that when a given palette $P$ is viewed as a vertex in $\Gamma^-_c(H^{\Delta}_t)$,  
then its degree is equal to the number of vertices in $G$ sharing the palette $P$. 
\end{proof}

The next Proposition states a well-known property of forests.
\begin{proposition}
The average degree of a forest is strictly less than $2$.
\end{proposition}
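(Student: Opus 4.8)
The plan is to combine the standard edge count for forests with the handshake lemma. First I recall that a forest on $n\ge 1$ vertices which splits into $c$ connected components has exactly $n-c$ edges: each component is a tree, a tree on $m$ vertices has $m-1$ edges, and summing over the $c$ components gives $\sum_{i=1}^{c}(m_i-1)=n-c$, since $\sum_{i=1}^{c} m_i = n$. Next I apply the handshake lemma, so that the sum of the degrees of all vertices equals twice the number of edges, namely $2(n-c)$. Dividing by the number of vertices $n$, the average degree of the forest equals
\[
\frac{2(n-c)}{n}=2-\frac{2c}{n}.
\]
Since a nonempty forest has at least one component we have $c\ge 1$, hence $\tfrac{2c}{n}>0$ and the average degree is strictly smaller than $2$, as required.

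I do not expect any genuine obstacle here: the only point needing a word of care is the degenerate case $n=0$, for which the average degree is undefined and the statement is read as vacuous. An alternative proof by induction on the number of edges (using the fact that any forest with at least one edge contains a leaf) is possible but slightly clumsier, since deleting a leaf also decreases the vertex count; the direct counting argument above is cleaner and is the one I would write out. In the application that follows, this bound will be paired with Lemma \ref{lem:average_degree} to control how many vertices of $\Gamma^-_c(H^{\Delta}_t)$ can share a palette, which is the mechanism driving the quadratic lower bound on the palette index.
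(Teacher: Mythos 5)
Your proof is correct and uses essentially the same argument as the paper: bound the number of edges of a forest (you use the exact count $n-c$, the paper uses the bound $n-1$), then apply the handshake lemma and divide by $n$. The extra precision with the component count $c$ changes nothing substantive.
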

\begin{proof}
Suppose that the forest $F$ has $n$ vertices. Then $F$ has at most $n-1$ edges and
\begin{equation*}
\sum_{v\in V(F)}d(v)=2|E(F)|\le2(n-1)
\end{equation*}
so that the average degree is
\begin{equation*}
\frac{1}{n}\sum_{v\in V(F)}d(v)\leq \frac{2(n-1)}{n}<2
\end{equation*}
\end{proof}


By the previous Proposition and Lemma \ref{lem:average_degree}, 
the average number of vertices in $H^{\Delta}_t$ sharing the same 
palette is less than $2$ and that implies the following lower bound 
for the palette index of $H^{\Delta}_t$:
\begin{equation*}
\text{\s}(H^{\Delta}_t) > \frac{\Delta}{2} + 1
\end{equation*}



\begin{theorem}
\begin{equation}
\frac{\Delta}{2}\left(\Delta-2\right) < \check s(G^{\Delta}) < \left(\Delta+1\right) \left(\Delta-2\right)
\end{equation}
\end{theorem}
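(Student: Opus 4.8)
The plan is to prove the two inequalities separately. The lower bound will follow by applying, component by component, the reasoning already used in this section to bound $\check s(H^{\Delta}_t)$ from below, together with one extra remark about palette cardinalities; the upper bound will follow by exhibiting one economical colouring of $G^{\Delta}$ and counting.

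\emph{Lower bound.} I would let $c$ be an arbitrary proper edge-colouring of $G^{\Delta}$ and, for each $t\in\{1,\dots,\Delta-2\}$, look at its restriction to the component $H^{\Delta}_t$. Exactly the argument that yields $\check s(H^{\Delta}_t)>\frac{\Delta}{2}+1$ applies to this restriction: since $\Gamma^-_c(H^{\Delta}_t)$ is a forest, and by Lemma \ref{lem:average_degree} the degree of a vertex of $\Gamma^-_c(H^{\Delta}_t)$ equals the number of non-central vertices of $H^{\Delta}_t$ carrying that palette, and a forest has average degree less than $2$ while the total degree here is $\Delta$, the colouring $c$ exhibits strictly more than $\Delta/2$ distinct palettes among $v^0_t,\dots,v^{\Delta-1}_t$. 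The extra remark is that every such palette has cardinality exactly $t+1$ (the degree of a non-central vertex of $H^{\Delta}_t$), and the integers $t+1$ for $t=1,\dots,\Delta-2$ are pairwise distinct; hence no palette occurring at a non-central vertex of $H^{\Delta}_t$ can occur at any vertex of $H^{\Delta}_s$ with $s\ne t$ (and none of them equals a central palette, which has size $\Delta$). Adding the per-component counts then gives more than $\sum_{t=1}^{\Delta-2}\frac{\Delta}{2}=\frac{\Delta}{2}(\Delta-2)$ distinct palettes in $(G^{\Delta},c)$, and since $c$ was arbitrary this yields $\check s(G^{\Delta})>\frac{\Delta}{2}(\Delta-2)$.

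\emph{Upper bound.} Here it suffices to produce one colouring with few palettes. I would fix a colour set $\{c_0,\dots,c_{\Delta-1}\}$ and, in every component $H^{\Delta}_t$, colour $e^j_t$ with $c_j$ for $j=0,\dots,\Delta-1$, and colour the $t$ parallel edges of the block $L^{\,j}$ with any $t$ of the $\Delta-2$ colours in $\{c_0,\dots,c_{\Delta-1}\}\smallsetminus\{c_j,c_{j'}\}$; this is possible since $t\le\Delta-2$, and it is proper because inside a block the parallel edges are mutually adjacent and avoid $c_j$ and $c_{j'}$, while distinct blocks meet only at $u_t$, where the colours $c_0,\dots,c_{\Delta-1}$ are already pairwise distinct. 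With this choice every central vertex $u_t$ carries the single palette $\{c_0,\dots,c_{\Delta-1}\}$, and each $H^{\Delta}_t$ contributes at most $\Delta$ further palettes, one per non-central vertex (each of size $t+1<\Delta$, hence distinct from the central one). Thus $(G^{\Delta},c)$ has at most $1+\Delta(\Delta-2)=(\Delta-1)^2$ palettes, and since $(\Delta-1)^2<(\Delta+1)(\Delta-2)$ for every $\Delta\ge 4$, we get $\check s(G^{\Delta})<(\Delta+1)(\Delta-2)$.

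The only point that really needs care is the cardinality remark in the lower bound: it is what turns the bounds for the individual $H^{\Delta}_t$ from quantities one could only take a maximum of into quantities one may \emph{sum}, and so it is precisely the source of the quadratic growth. Everything else — the per-component application of the forest argument, the verification that the proposed colouring is proper, and the arithmetic comparison $(\Delta-1)^2<(\Delta+1)(\Delta-2)$ — is routine. I would also note that the economical colouring above uses only $\Delta$ colours, so the same counting will go through for the odd-$\Delta$ variant after the minor modification of $H^{\Delta}$ mentioned earlier.
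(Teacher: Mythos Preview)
Your lower-bound argument is essentially the paper's: both note that non-central palettes in $H^{\Delta}_t$ have cardinality $t+1$, hence are disjoint across components, and then sum the per-component estimate $>\Delta/2$ coming from the forest property of $\Gamma^-_c(H^{\Delta}_t)$.

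For the upper bound your route differs from the paper's. The paper simply observes that $|V(G^{\Delta})|=(\Delta+1)(\Delta-2)$ and invokes the trivial bound $\check s\le |V|$; as written this only yields $\le$ rather than $<$, so a one-line remark (e.g.\ that in some colouring all the centres $u_t$ share a palette) is tacitly needed. You instead build an explicit $\Delta$-colouring in which every $u_t$ carries the same palette $\{c_0,\dots,c_{\Delta-1}\}$, obtaining the sharper estimate $\check s(G^{\Delta})\le(\Delta-1)^2$, and then check $(\Delta-1)^2<(\Delta+1)(\Delta-2)$ for $\Delta\ge 4$. Your argument is longer but self-contained, gives a strictly better constant, and as you note has the pleasant side effect of showing $\chi'_{\check s}(G^{\Delta})\le\Delta$; the paper's argument is a single sentence. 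Both are valid once the strictness is secured.
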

\begin{proof}
The second inequality is an immediate consequence of the fact that 
the number of vertices in $G^{\Delta}$ is 
$\left(\Delta+1\right) \left(\Delta-2\right)$.

For the first inequality, it is sufficient to observe that all vertices of degree $t+1$ 
in $G^\Delta$  belong to the subgraph $H^{\Delta}_t$, so they cannot 
share the same palette with a vertex in another subgraph $H^{\Delta}_{t'}$, 
with $t'\ne t$. On the other hand, the vertex $u_t$ of degree $\Delta$ in 
$H^{\Delta}_t$ could share the same palette with every other vertex of degree 
$\Delta$, one in each subgraph $H^{\Delta}_t$. We obtain 
\[
\check s(G^{\Delta})\ge\sum_t(\check s(H^{\Delta}_t)-1)>\left(\Delta-2\right)\frac{\Delta}{2}.
 \]

\end{proof}

We observe that $G^{\Delta}$ is not connected. If a new vertex $\infty$ 
is introduced and is declared adjacent to each vertex of degree $\Delta$ 
in  $G^{\Delta}$, we obtain a connected multigraph $\widetilde{G}^{\Delta}$ 
of maximum degree $\Delta+1$. The palette index of the multigraphs 
$\widetilde{G}^{\Delta}$ is again bounded from below by a quadratic polynomial in $\Delta$.

\section{Some considerations on a related parameter}

We introduce a new natural parameter related to the palette 
index of a multigraph. Consider an edge-coloring $c$ of $G$ which 
minimizes the number of palettes, that is the number of palettes 
is exactly $\text{\s}(G)$: how many colors does $c$ require? 
More precisely, we consider the minimum $k$ such that there 
exists a $k$-edge-coloring of $G$ with $\text{\s}(G)$ palettes. 
We will denote such a minimum by $\chi'_{\text{\s}} (G)$. 
Obviously, $\chi'_{\text{\s}}(G)\geq \chi'(G)$ because we need at 
least the number of colors in a proper edge-coloring. In \cite{HKMW}, 
the authors remark that in some cases this number is strictly larger 
than the chromatic index of the graph. 
How much larger could it be?\\

An upper bound for the value of $\chi'_{\text{\s}}(G) $ for some classes of graphs 
can be deduced from an analysis of the proofs of the corresponding results for the 
palette index. 

\begin{itemize}
 \item \cite{HKMW} Let $K_n$ be a complete graph with $n>1$ vertices. Then, 
$$\chi'_{\text{\s}}(K_n)= \Delta \quad \text{if} \quad n \equiv 0\mod2 \quad
\chi'_{\text{\s}}(K_n) \leq \frac{3\Delta}{2} \quad \text{if} \quad n \equiv 1\mod2 $$

In particular, if $n=4k+3$ then it is proved that the palette index is equal to $3$ 
and the proof is obtained by using three sets of colors of cardinality $2k+1$. 
If $n=4k+5$, the proof works by using three sets of colors of cardinality $2k+1$ and 
three additional colors, that is $6(k+1)$ colors. The number of colors is exactly 
$\frac{3\Delta}{2}$ in both cases.\\  

\item \cite{HKMW} Let $G$ be a cubic graph. Then,
$$ \chi'_{\text{\s}}(G) \leq 5. $$

In  particular, five colors are necessary if $G$ is not $3$-edge-colorable and 
has no perfect matching.

\item \cite{BonMaz} Let $G$ be a $4$-regular graph. Then,
$$ \chi'_{\text{\s}}(G) \leq 6. $$ 

In particular, six colors are used in some examples with palette index $3$ (see 
the proof of Proposition 11 in \cite{BonMaz}).

\item \cite{BonBonMaz} Let $G$ be a forest. Then,

$$ \chi'_{\text{\s}}(G) = \Delta. $$

\item \cite{HH} Let $K_{m,n}$ be a complete bipartite graph with $1 \leq m \leq n$. 
This situation is a little more involved, in the sense that we cannot always obtain 
a good upper bound for $\chi'_{\text{\s}}(K_{m,n})$ using the proofs of the results 
in \cite{HH}. In some cases, see for instance Proposition 11 in \cite{HH}, the number 
of colors is twice the maximum degree $\Delta$ (recall that minimizing the number of 
colors was not important in that context). 
Nevertheless, we analyze some small cases and obtain 
the same number of palettes (the minimum) by using a smaller number of colors.\\

One such example is obtained by considering the graph $K_{5,6}$ 
(i.e. case $k=3$ in Proposition 11 of \cite{HH}). Denote by $\{u_1,\ldots,u_5\}$ 
and $\{v_1,\ldots,v_6\}$ the bipartition of the vertex-set of $K_{5,6}$. The proof 
of Proposition 11 in \cite{HH} furnishes an edge-coloring with $12$ colors and $6$ palettes. 
Following the notation used in \cite{HH} we represent the coloring with a matrix, where 
the element in position $(i,j)$ is the color of the edge $u_iv_j$.    

\begin{equation*}
M_{5,6}=\begin{pmatrix}
1 & 2 & 3 & 4 & 5 & 6\\
3 & 1 & 2 & 6 & 4 & 5\\
2 & 3 & 1 & 5 & 6 & 4\\
7 & 8 & 9& 10 & 11 & 12\\
8 & 7 & 12 & 11 & 10 & 9\\
\end{pmatrix},
\end{equation*}

The following coloring has only $8$ colors and again $6$ palettes.

\begin{equation*}
M'_{5,6}=\begin{pmatrix}
1 & 2 & 3 & 4 & 5 & 6\\
3 & 1 & 2 & 6 & 4 & 5\\
2 & 3 & 1 & 5 & 6 & 4\\
4 & 5 & 7& 8 & 1 & 2\\
5 & 4 & 8 & 7 & 2 & 1\\
\end{pmatrix},
\end{equation*}

We would like to stress that, even if we can obtain similar colorings 
for some other sporadic cases, we are not able to generalize our results 
to all infinite families considered in \cite{HH}.      

\end{itemize}

All previous results and the study of some sporadic cases suggest 
that $\chi'_{\text{\s}}(G)$ cannot be too large with respect to $\Delta$.
In particular, we believe there exists a linear upper bound for 
$\chi'_{\text{\s}}(G)$ in terms of $\Delta$. The following is thus 
an even stronger conjecture. 

\begin{conjecture}\label{main_conj}
Let $G$ be a (simple) graph. Then,
\begin{equation*}
\chi'_{\text{\s}}(G)\leq \lceil\dfrac{3}{2}\Delta \rceil 
\end{equation*}
\end{conjecture}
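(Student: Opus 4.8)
Since the final statement is a conjecture, I offer not a proof but the line of attack I would pursue.

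\textbf{Overall strategy.} I would fix a graph $G$ of maximum degree $\Delta$, assume $G$ connected (otherwise treat components separately, taking the maximum of the resulting color counts), and dispose of the easy case first: if $G$ is $\Delta$-regular and class $1$, then $\check s(G)=1$ is realized by a proper $\Delta$-edge-coloring, and $\Delta\le\lceil\frac32\Delta\rceil$, so there is nothing to prove. In general I would start from an edge-coloring $c$ that realizes the minimum number $\check s(G)$ of palettes, say with $k$ colors; if $k\le\lceil\frac32\Delta\rceil$ we are done, so the task is to recolor $G$ so as to lower the number of colors without ever increasing the number of palettes. The whole difficulty is to carry out color reductions while keeping the partition of $V(G)$ into palette-classes under control.

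\textbf{The recoloring tool.} The basic observation I would exploit is that Kempe interchanges interact well with palettes. If one swaps colors $a$ and $b$ along a \emph{maximal} $(a,b)$-alternating walk, then every internal vertex of that walk still sees both $a$ and $b$ afterwards, so its palette is unchanged; only the two endpoints (none, if the walk is a cycle) have their palette altered, and in the most innocuous possible way, by deleting one of $\{a,b\}$ and inserting the other. In particular, interchanges along $(a,b)$-alternating \emph{cycles} never change any palette and are therefore always available for free. This gives a ``merge move'': two colors $a,b$ whose color classes $M_a,M_b$ form a matching when taken together can be identified, lowering $k$ by one; and when $M_a\cup M_b$ fails to be a matching, one tries to remove the few offending overlaps by local interchanges and by the fan-rotation / Kempe-chain machinery used in the proofs of Vizing's and Shannon's theorems, this time bookkeeping the palette-class of every vertex touched. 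A natural first lemma here is that $k>\lceil\frac32\Delta\rceil$ forces the existence of a pair of colors whose classes overlap in few vertices, since $k$ matchings covering a graph of maximum degree $\Delta$ are, on average, heavily overlapping; these few overlaps are then the ones to be dissolved.

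\textbf{Where the constant $3/2$ comes from, and the main obstacle.} The target bound $\lceil\frac32\Delta\rceil$ is exactly (the ceiling of) Shannon's universal bound $\lfloor\frac32\Delta\rfloor$ for the chromatic index of multigraphs, and the cubic case in the excerpt ($\chi'_{\check s}=5=\lceil\frac32\cdot 3\rceil$ for a class-$2$ cubic graph with no perfect matching) shows the bound is tight; so the conjecture is really the assertion that an optimal palette can always be achieved within the classical Shannon color budget. Accordingly I would try to adapt the proof of Shannon's theorem step by step, replacing ``the coloring remains proper'' by ``the coloring remains proper and its palette partition is unchanged''. The hard part — and, I expect, the reason the statement is stated only as a conjecture — is precisely this: the fan rotations and Kempe swaps in the Vizing/Shannon argument routinely alter the palettes of many vertices simultaneously, so it is far from clear that the moves can be constrained to preserve the palette partition exactly while still guaranteeing termination at $\lceil\frac32\Delta\rceil$ colors. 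A sensible way to build up to it would be to settle first the $\Delta$-regular class-$2$ case, where $\check s$ is already partly understood and the bound is known to be sharp, and then to attack general graphs by embedding them, one degree-class at a time, into regular supergraphs while tracking how $\check s$ and the number of colors behave under such embeddings.
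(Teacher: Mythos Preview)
The paper contains no proof of this statement: it is explicitly introduced as a conjecture and immediately followed by the sentence ``As far as we know, this conjecture is new and completely open.'' You correctly recognize this and offer a heuristic line of attack rather than a proof, so there is nothing in the paper to compare your argument against.

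One small remark on your framing. You motivate the constant $3/2$ via Shannon's theorem, but note that the conjecture is stated for \emph{simple} graphs, where Vizing already gives $\chi'(G)\le\Delta+1$; so the target $\lceil\tfrac{3}{2}\Delta\rceil$ is genuinely larger than the chromatic index here, and the analogy with adapting Shannon's proof is suggestive rather than structural. The evidence the paper actually marshals for the bound is the list of worked classes (complete graphs, cubic graphs, $4$-regular graphs, forests, some complete bipartite graphs), in each of which the authors can realize the minimum number of palettes with at most roughly $\tfrac{3}{2}\Delta$ colors. Your Kempe-chain/merge idea is a natural thing to try, and your own diagnosis of the obstacle --- that Vizing-style fan rotations scramble many palettes at once --- is exactly why the authors leave the question open.
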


As far as we know, this conjecture is new and completely open.
We believe any progress in that direction could be useful 
for a deeper understanding of the behavior of the palette index 
of general graphs. 

\section{Concluding remarks and open problems}

In this final Section we propose some further open questions and 
indicate a few connections with other known problems.

In Section \ref{sec:mainconstruction}, we have presented a family 
of multigraphs whose palette index is expressed by a quadratic 
polynomial in $\Delta$. We were not able to find a family of 
simple graphs with such a property and so we leave the existence 
of such a family as an open problem.

\begin{problem}
For $\Delta=3$, $4$, \ldots, does there exist a simple graph 
with maximum degree $\Delta$ whose palette index is quadratic in $\Delta$?  
\end{problem}

As far as we know, the best general upper bound in terms of $\Delta$ 
for the palette index of a simple graph $G$ is the trivial one, which is 
obtained from a $(\Delta+1)$-edge-colouring $c$ of $G$: in principle, 
each non-empty proper subset of the set of colours could occur as a palette 
of $(G,c)$, whence $\text{\s}(G)\le2^{\Delta+1}-2$. On the other hand, all 
known examples suggest that this upper bound is far from being tight. 
In particular, we raise the question whether a polynomial upper 
bound holding for general multigraphs may exist at all. 

\begin{problem}\label{poly}
Prove the existence of a polynomial $p(\Delta)$ such that 
$\text{\s}(G)\le p(\Delta)$ for every multigraph $G$ with maximum 
degree $\Delta$.
\end{problem}

We slightly suspect that if a polynomial $p$ solving Problem \ref{poly} 
can be found at all, then some quadratic polynomial will do as well.

Finally, we would like to stress how this kind of problems on 
the palette index is somehow related to another well-known type 
of edge-colorings, namely interval edge-colorings, introduced by 
Asratian and Kamalian in \cite{AsrKam}.

\begin{definition}
A proper edge-coloring $c$ of a graph with colors $\{c_1,c_2,\ldots,c_t\}$ 
is called an \emph{interval edge-colouring} if all colours are actually used, 
and the palette of each vertex is an interval of consecutive colors. 
\end{definition}

The following relaxed version of the previous concept was first 
studied in \cite{Kot} and then explicitly introduced in \cite{dWS}.

\begin{definition}
A proper edge-colouring $c$ of a graph with colors 
$\{c_1,c_2,\ldots,c_t\}$ is called an \emph{interval cyclic edge-colouring} 
if all colours are used and the palette of each vertex is either an 
interval of consecutive colors or its complement. 
\end{definition}

Both interval and interval cyclic edge-colorings are thus proper edge-colourings 
with severe restrictions on the set of admissible palettes.   

There are many more results on interval edge-colourings 
(see among others \cite{PetMkh}). In particular, it is known that not 
all graphs admit an interval edge-colouring. Furthermore, it is proved 
in \cite{N} that if a multigraph of maximum degree $\Delta$ admits an 
interval edge-colouring then it also admits an interval cyclic 
$\Delta$-edge-colouring.

The following holds:

\begin{proposition}
Let $G$ be a multigraph of maximum degree $\Delta$ admitting an interval 
edge-colouring. Then, $\text{\s}(G) \le \Delta^2 - \Delta + 1$.
\end{proposition}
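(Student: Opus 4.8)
The plan is to use the result of \cite{N} quoted just above: if $G$ admits an interval edge-colouring then it admits an interval cyclic $\Delta$-edge-colouring $c$ with colours $\{c_0,c_1,\ldots,c_{\Delta-1}\}$. Working with such a colouring, every palette $P_c(x)$ is either a cyclic interval of consecutive colours (indices read modulo $\Delta$) or the complement of such an interval. So the whole argument reduces to counting how many distinct sets of this special form can occur.

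First I would fix a vertex $x$ and note that its palette has size $d(x)\le\Delta$. If $d(x)=\Delta$ then $P_c(x)$ is the full colour set, which accounts for a single palette. If $1\le d(x)\le\Delta-1$, then a cyclic interval of length $d(x)$ is determined by its starting colour, giving at most $\Delta$ choices for each length; and the complement of a cyclic interval of length $\Delta-d(x)$ is again a cyclic interval (of length $d(x)$), so the "complement" case produces no new palettes in the cyclic setting. Summing over the possible sizes $d=1,2,\ldots,\Delta-1$ gives at most $\Delta(\Delta-1)$ palettes, plus the one full palette, for a total of at most $\Delta^2-\Delta+1$. This yields $\text{\s}(G)\le\Delta^2-\Delta+1$ directly from this particular colouring, and since $\text{\s}(G)$ is a minimum over all proper colourings, the bound follows.

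The one subtlety I would be careful about is the interplay between "interval cyclic" and genuine cyclic intervals: in an interval cyclic edge-colouring a palette is allowed to be either a genuine interval $\{c_i,c_{i+1},\ldots,c_j\}$ (no wraparound) or the complement of such an interval, and one must check that both of these are cyclic intervals once we only have $\Delta$ colours, so that the count above is not an undercount. A genuine interval is obviously a cyclic interval; and the complement of a genuine interval, while possibly not a genuine interval, is still a cyclic interval. Hence every admissible palette is a cyclic interval, and the counting in the previous paragraph is valid. The main obstacle — really the only place where anything beyond bookkeeping happens — is invoking the theorem of \cite{N} correctly to pass from an arbitrary interval edge-colouring (which may use many more than $\Delta$ colours) to an interval cyclic $\Delta$-edge-colouring; once that reduction is in hand, the bound is a short pigeonhole-style count.
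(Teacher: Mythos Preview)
Your proof is correct and follows essentially the same route as the paper: invoke \cite{N} to pass to an interval cyclic $\Delta$-edge-colouring, then count the admissible palettes as cyclic intervals of each length $1,\ldots,\Delta-1$ (at most $\Delta$ each) plus the single full palette. Your extra paragraph verifying that complements of genuine intervals are still cyclic intervals is a welcome clarification, but the argument is the same as the paper's.
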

\begin{proof}
Since $G$ admits an interval edge-colouring, then it also admits an 
interval cyclic $\Delta$-edge-colouring $c$ (see \cite{N}). 
Each palette of $(G,c)$ is thus an interval of colors in the 
set $\{c_1,c_2,\ldots,c_{\Delta}\}$ or its complement is one such 
interval. For $t=1,\ldots, \Delta-1$, there are exactly $\Delta$ 
such subsets of cardinality $t$, and a unique one for $t=\Delta$. 
We have thus at most $\Delta(\Delta-1)+1$ distinct palettes in $(G,c)$, 
that is $\text{\s}(G) \leq \Delta^2 - \Delta + 1$.
\end{proof}

In other words, the previous Proposition assures that a putative example 
of a family of multigraphs whose palette index grows more than quadratically 
in $\Delta$ should be searched for within the class of multigraphs without 
an interval edge-colouring.\\


In this paper, we also introduce the palette multigraph of a colored 
multigraph $(G,c)$. A precise characterization of the palette multigraph 
of the family introduced in Section \ref{sec:mainconstruction} is the key 
point of our main proof. It suggests that a study of palette multigraphs in 
a general setting could increase our knowledge of the palette index. 
Possibly, it could also help in the search for an answer to some of 
the previous problems. Hence, we conclude our paper with the following:

\begin{problem}
Let $H$ be a multigraph. Determine whether a colored multigraph $(G,c)$ 
exists, such that $H$ is the palette multigraph of $(G,c)$.  
\end{problem}


\begin{thebibliography}{99}

\bibitem{AsrKam} A.S. Asratian; R.R. Kamalian, Interval colorings of edges of a multigraph, Appl. Math. 5 (1987), 25--34.


\bibitem{BonMur} J.A. Bondy, U.S.R. Murty, Graph Theory, Springer, 2008.

\bibitem{BonBonMaz} A. Bonisoli, S. Bonvicini, G. Mazzuoccolo, On the palette index of a graph: the case of trees, Lecture Notes of
Seminario Interdisciplinare di Matematica 14 (2017), 49--55.

\bibitem{BonMaz} S. Bonvicini, G. Mazzuoccolo, Edge-colorings of $4$--regular graphs with the minimum number of palettes, Graphs Combin. 32 (2016), 1293--1311.


\bibitem{dWS} D. de Werra, P. Solot, Compact cylindrical chromatic scheduling, SIAM J. Discrete Math. 4 (1991), 528--534.


\bibitem{Gal} J.A. Gallian, A dynamic survey of graph labeling,
Electron J. Combin. (2016) \# DS6.

\bibitem{HH}  M. Hor\v{n}\'{a}k, J. Hud$\acute{\mbox{a}}$k, On the palette index of complete bipartite graphs, to appear in Discussiones Mathematicae
Graph Theory.

\bibitem{HKMW} M. Hor\v{n}\'{a}k, R. Kalinowski, M. Meszka, M. Wo$\acute{\mbox{z}}$niak,  Edge colorings and the number of palettes, Graphs Combin. 30 (2014), 619--626.


\bibitem{Kot} A. Kotzig, 1-factorizations of Cartesian products of regular graphs, J. Graph Theory 3 (1979), 23--34.

\bibitem{N} A. Nadolski, Compact cyclic edge-colorings of graphs, Discrete Math. 308 (2008), 2407--2417

\bibitem{PetMkh} P.A.Petrosyan, S.T.Mkhitaryan, Interval cyclic edge-colorings of graphs, Discrete Math. 339 (2016), 1848--1860.


\end{thebibliography}
\end{document}